\def\sp{\operatorname{sp}}
\renewcommand\epsilon{\varepsilon}
\def \<{\langle}
\def \>{\rangle}
\def \supp {\operatorname{supp}}
\def \((  {(\!(}
\def \)) {)\!)}
\DeclareMathSymbol{\precequ}{\mathrel}{symbols}{"16}
\DeclareMathSymbol{\succequ}{\mathrel}{symbols}{"17}
\newtheorem{theorem}{Theorem}[section]
\newtheorem{lemma}[theorem]{Lemma}
\newtheorem{prop}[theorem]{Proposition}
\theoremstyle{definition}
\theoremstyle{remark}
\def \fd {{\mathfrak d}}
\let\oldi\i
\let\oldj\j
\renewcommand\i{\relax\ifmmode{\boldsymbol{i}}\else\oldi\fi}
\renewcommand\j{\relax\ifmmode{\boldsymbol{j}}\else\oldj\fi}
\renewcommand\leq{\leqslant}
\renewcommand\geq{\geqslant}
\renewcommand\preceq{\preccurlyeq}
\renewcommand\le{\leq}
\renewcommand\ge{\geq}
\DeclareMathAlphabet{\mathbf}{OML}{cmm}{b}{it}
\DeclareFontFamily{U}{fsy}{}
\DeclareFontShape{U}{fsy}{m}{n}{<->s*[.9]psyr}{}
\DeclareSymbolFont{der@m}{U}{fsy}{m}{n}
\DeclareMathSymbol{\der}{\mathord}{der@m}{182}
\DeclareSymbolFont{der@m}{U}{fsy}{m}{n}
\DeclareMathSymbol{\derdelta}{\mathord}{der@m}{100}
\DeclareSymbolFont{imag@m}{OT1}{cmr}{m}{ui}
\DeclareMathSymbol{\imag}{\mathord}{imag@m}{105}
\DeclareFontFamily{OMS}{smallo}{}
\DeclareFontShape{OMS}{smallo}{m}{n}{<->s*[.65]cmsy10}{}
\DeclareSymbolFont{smallo@m}{OMS}{smallo}{m}{n}
\DeclareMathSymbol{\smallo}{\mathord}{smallo@m}{79}
\DeclareFontFamily{OMS}{largerdot}{}
\DeclareFontShape{OMS}{largerdot}{m}{n}{<->s*[.8]cmsy10}{}
\DeclareSymbolFont{largerdot@m}{OMS}{largerdot}{m}{n}
\DeclareMathSymbol{\largerdot}{\mathord}{largerdot@m}{15}
\DeclareMathSymbol{\llambda}{\mathord}{der@m}{108}
\DeclareMathSymbol{\rrho}{\mathord}{der@m}{114}
\newcommand{\equationqed}[1]{\[\pushQED{\qed}#1 \qedhere\popQED\]\let\qed\relax}
\newcommand{\alignqed}[1]{\begin{align*}\pushQED{\qed} #1 \qedhere\popQED\end{align*}\let\qed\relax}
\def \No{\text{{\bf No}}}
\begin{document}
\title{Truncation Structures}

\author[van den Dries]{Lou van den Dries}
\address{Department of Mathematics\\
University of Illinois at Urbana-Cham\-paign\\
Urbana, IL 61801\\
U.S.A.}
\email{vddries@illinois.edu}

\date{October 2025. Funding information to disclose: None} 

\begin{abstract} We characterize intrinsically the truncation structures on valued fields arising from
embeddings into Hahn fields with truncation closed image.
\end{abstract}

\maketitle


\bigskip\noindent
\section{Introduction}

\medskip\noindent
Truncation of series in Hahn fields is surprisingly robust under various operations, as was first noticed and exploited by Mourgues and Ressayre~\cite{MR} for certain kinds of Hahn fields; see also \cite{D} and \cite{F} for general Hahn fields. An isomorphism of a valued field $F$ onto a truncation closed subfield of a Hahn field induces a truncation structure on $F$. Our goal is to give an intrinsic characterization of such truncation structures. 
This is done by (T1)--(T8) below and Theorem~\ref{t1}. Our characterization is of a first-order nature, except for one part, (T5),  that mentions well-orderings.

Below we use notations and terminology from \cite{D}. See also \cite[Section 8]{D} for further background on truncation and connections to o-minimality and to Conway's field $\No$ of surreal numbers.  

As to other work on ``axiomatizing''  truncation, \cite[Section 3.2]{C} contains partial results in this direction. 
One reviewer alerted me to \cite{FKK}, which also gives an intrinsic characterization of truncation closed embeddability. It is more complicated than the present treatment. In particular, ``T5 is a more sensible explanation of the non-first-order-aspect'' to quote another referee, who also remarks that most of \cite{FKK} easily translates into the setting of the present paper. 

\medskip\noindent 
Let $F$ be a field equipped with a (Krull) valuation $v:F\to \Gamma\cup\{\infty\}$ (so $\Gamma$ is an ordered abelian group\footnote{Our convention here is that the (translation invariant) ordering of $\Gamma$ is total.}, additively written) and with a lift $C$ of the residue field, that is,  $C$ is a subfield of $F$, $C\subseteq \mathcal{O}:= \{f\in F:\ vf\ge 0\}$, the valuation ring of $F$, and $C$ is mapped bijectively onto the residue field of $\mathcal{O}$ by the residue map.  Let $\alpha, \beta, \gamma$ range over elements of $\Gamma$. 

A ``universal'' object of this kind is the Hahn field $C((t^\Gamma))$.  We represent an element $f$ of $C((t^\Gamma))$ as a formal sum $\sum_{\gamma}c_{\gamma}t^\gamma$, with coefficients $c_\gamma\in C$. The {\em support\/} of such $f$ is the wellordered subset $\{\gamma:\ c_{\gamma}\ne 0\}$ of $\Gamma$, and we set  $v_t(f):=\min \supp f$ if $f\ne 0$, and
$v_t(0):= \infty$, which gives us the Hahn valuation $v_t: C((t^\Gamma))\to \Gamma\cup \{\infty\}$. 
Now $C((t))$ has in addition a
truncation operation: the 
{\em $\alpha$-truncation\/} of such $f$ is the element
$f|_{\alpha}:=\sum_{\gamma<\alpha} c_\gamma t^\gamma$ of $C((t^\Gamma))$. Call a subset $S$ of
$C((t^\Gamma))$ {\em truncation closed\/} if $f|_\alpha\in S$ for all $f\in S$ and all $\alpha$.

The field $F$ comes equipped with the valuation $v: F\to \Gamma\cup\{\infty\}$ and the subfield $C$. Accordingly, a map
$e: F\to C((t^\Gamma))$ is said to be an {\em embedding\/} if $e$ is a field embedding that is the identity on $C$ such that $v(f)=v_t\big(e(f)\big)$ for all $f\in F$ (so $e$ is in particular a valued field embedding).
We are interested in the case where such an embedding has truncation closed image. 

Let $e: F \to C((t^\Gamma))$ be an
embedding  with truncation closed image. Then $e$ induces what we call a
{\em truncation operation} 
$$(f,\alpha)\mapsto f|_{\alpha}\ :\  F\times \Gamma\to F$$
on $F$ by the requirement $e(f|_{\alpha})=e(f)|_{\alpha}$
for all $f\in F$ and all $\alpha$. This operation has the following eight properties. First, for all $f,g\in F$, $c\in C$, and $\alpha, \beta$,
\begin{enumerate}
\item[(T1)] $v\big(f-(f|_{\alpha})\big)\ge \alpha$;
\item[(T2)] $v(f)\ge \alpha\ \Rightarrow\ f|_{\alpha}=0$;
\item[(T3)] $\beta> \alpha\ \Rightarrow\ (f|_{\alpha})|_{\beta}=f|_{\alpha}$;
\item[(T4)]  $(f+g)|_{\alpha}=(f|_{\alpha})+(g|_{\alpha})$ and $(cf)|_{\alpha}=c\cdot(f|_{\alpha})$.
\end{enumerate} 
The next two properties involve the sets 
$$\sp(f)\ :=\ \{\gamma:\ v\big(f-(f|_{\gamma})\big)=\gamma\}, \qquad(f\in F).$$ 
For $f\in C((t^\Gamma))$ we have $\supp f=\{\gamma:\ v_t(f-f|_{\gamma})=\gamma\}$, so for all $f, g\in F$ and $\gamma$: \begin{enumerate}
\item[(T5)]  $ \sp(f)$ is wellordered;
\item[(T6)] $\sp(f)+\sp(g)< \gamma\ \Longrightarrow\ \sp(fg)<\gamma$.
\end{enumerate}
Moreover, there is for each $\gamma$ a unique element
$\tau^\gamma$ of $F^\times$ with $e(\tau^\gamma)=t^\gamma$, and the resulting map $\gamma\mapsto \tau^\gamma: \Gamma\to F^\times$ has the following properties:  for all $\alpha, \beta, \gamma$,
\begin{enumerate}
\item[(T7)] $\tau^{\alpha+\beta}=\tau^\alpha \tau^\beta$;
\item[(T8)]  $\sp(\tau^\gamma)=\{\gamma\}$.
\end{enumerate} 

\noindent
Except for (T5), these properties are logically of first-order nature in terms of $F$ with this truncation operation
$\Gamma\times F\to F$ and the map $\gamma\mapsto \tau^\gamma: \Gamma\to F^\times$.

Define a {\em truncation structure on $F$} to be a map $(f,\alpha)\mapsto f|_{\alpha}: F\times \Gamma\to F$
together with a map $\gamma\mapsto \tau^\gamma: \Gamma \to F^\times$ such that (T1)--(T8) are satisfied. 
We just saw that any embedding $F\to C((t^\Gamma))$ with truncation closed image induces a truncation structure on $F$. Our aim is to reverse this:

\begin{theorem}\label{t1}  Any truncation structure on $F$ as above is induced by a unique embedding  $F\to C((t^\Gamma))$ sending each $\tau^\gamma$ to $t^\gamma$, with truncation closed image.
\end{theorem}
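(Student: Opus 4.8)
The plan is to reconstruct the embedding $e$ from the coefficients it is forced to have, and then check all the required properties, with the multiplicativity of $e$ as the one genuinely delicate point.

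\emph{Building $e$.} From (T1) and (T2) one first records that $f|_\alpha=0$ whenever $vf\ge\alpha$, hence $vf=\min\sp(f)$ for $f\ne0$ and $\sp(f)=\emptyset$ iff $f=0$; and, using (T1)--(T4), the iteration rule $(f|_\alpha)|_\beta=f|_{\min(\alpha,\beta)}$ (for $\beta\le\alpha$, apply (T2) and (T4) to $f|_\alpha-f$, which has value $\ge\alpha$). By (T8), $v(\tau^\gamma)=\min\sp(\tau^\gamma)=\gamma$, so $(f-f|_\gamma)/\tau^\gamma\in\mathcal O$; let $c_\gamma(f)\in C$ be the unique element with the same residue. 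Then $c_\gamma(f)\ne0$ exactly when $\gamma\in\sp(f)$, so by (T5) we may set $e(f):=\sum_\gamma c_\gamma(f)\,t^\gamma\in C((t^\Gamma))$, with $\supp e(f)=\sp(f)$. By (T4), $c_\gamma$ is additive and $C$-linear in $f$, hence so is $e$, and $\sp(cf)=\sp(f)$ for $c\in C^\times$. Since $\supp e(f)=\sp(f)$ and $\min\sp(f)=vf$ we get $v_t(e(f))=vf$, so $e$ is injective. From $\tau^0=1$ (by (T7)) one gets $\sp(c)=\{0\}$ for $c\in C^\times$, so $e$ is the identity on $C$; and $\tau^\gamma|_\gamma=0$ by (T2), so $e(\tau^\gamma)=t^\gamma$. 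Finally the iteration rule together with $v(f-f|_\alpha)\ge\alpha$ gives $c_\gamma(f|_\alpha)=c_\gamma(f)$ for $\gamma<\alpha$ and $c_\gamma(f|_\alpha)=0$ for $\gamma\ge\alpha$, i.e.\ $e(f|_\alpha)=e(f)|_\alpha$; hence $e(F)$ is truncation closed, and, with injectivity, $e$ induces exactly the given truncation operation and the given map $\gamma\mapsto\tau^\gamma$.

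\emph{Multiplicativity.} We prove $e(fg)=e(f)e(g)$ by transfinite induction on the pair $(f,g)$, ordered by the lexicographic order on $(\operatorname{ot}\sp(f),\operatorname{ot}\sp(g))$ (the order types exist by (T5), and lexicographic order on pairs of ordinals is a well-order). If a support has a largest element $\alpha_1$ we peel off the top monomial, $f=f|_{\alpha_1}+c_{\alpha_1}(f)\,\tau^{\alpha_1}$, and reduce via (T7), additivity, and $e(\tau^\gamma)=t^\gamma$ to instances of strictly smaller rank; this disposes of everything except the case in which each of $\sp(f),\sp(g)$ is empty, a singleton, or has no largest element. An empty support is trivial and two singletons are immediate from (T7); in the remaining cases at least one support has no largest element. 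Fix $\gamma$ and compare the coefficient of $t^\gamma$ on the two sides. If there is no pair $\alpha\in\sp(f)$, $\beta\in\sp(g)$ with $\alpha+\beta\ge\gamma$, then — since a support without largest element would otherwise let us raise such an $\alpha$ or $\beta$ above $\gamma$ — there is in fact no pair with $\alpha+\beta=\gamma$ either, so $\sp(f)+\sp(g)<\gamma$, hence $\sp(fg)<\gamma$ by (T6), and both coefficients of $t^\gamma$ vanish. Otherwise pick $\alpha_1\in\sp(f)$, $\beta_1\in\sp(g)$ with $\alpha_1+\beta_1>\gamma$ and write $f=f|_{\alpha_1}+r_f$, $g=g|_{\beta_1}+r_g$, so that
\[
fg\ =\ f|_{\alpha_1}\cdot g\ +\ r_f\cdot g|_{\beta_1}\ +\ r_f\,r_g .
\]
Here $v(r_f)=\alpha_1$ and $v(r_g)=\beta_1$, so $v(r_f r_g)=\alpha_1+\beta_1>\gamma$ and the last term contributes nothing to the coefficient of $t^\gamma$. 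Since $\sp(f|_{\alpha_1})$, $\sp(g|_{\beta_1})$ are proper initial segments of $\sp(f)$, $\sp(g)$, and $\sp(r_f)$ is a final segment of $\sp(f)$, the inductive hypothesis applies to $(f|_{\alpha_1},g)$ and to $(r_f,g|_{\beta_1})$, giving $e(f|_{\alpha_1}g)=e(f)|_{\alpha_1}e(g)$ and $e(r_f\,g|_{\beta_1})=(e(f)-e(f)|_{\alpha_1})\,e(g)|_{\beta_1}$. Extracting the coefficient of $t^\gamma$, and using that $\mu+\nu=\gamma$ with $\mu\ge\alpha_1$ forces $\nu<\beta_1$, the three contributions reassemble into $\sum_{\mu+\nu=\gamma}c_\mu(f)c_\nu(g)$, which is the coefficient of $t^\gamma$ in $e(f)e(g)$. (When a support is a singleton $\{\alpha\}$ one takes $\alpha_1=\alpha$ and the same computation goes through.)

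\emph{Uniqueness, and the hard part.} If $e'$ is another embedding sending $\tau^\gamma$ to $t^\gamma$ and inducing the given truncation structure, then $e'(f|_\gamma)=e'(f)|_\gamma$, and $e'$ is $C$-linear and respects $v$, so the coefficient $d_\gamma$ of $t^\gamma$ in $e'(f)$ satisfies $v(f-f|_\gamma-d_\gamma\tau^\gamma)>\gamma$, which forces $d_\gamma=c_\gamma(f)$; hence $e'=e$. The hard part will be the multiplicativity step: choosing a rank on pairs $(f,g)$ for which every piece that gets produced is genuinely smaller, and, above all, the case in which neither support has a largest element — truncation alone cannot separate the two factors there, and one must combine (T6) with the splitting at a pair $\alpha_1+\beta_1>\gamma$.
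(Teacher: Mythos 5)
Your proof is correct, and its engine is the same as the paper's: a transfinite induction on the lexicographically ordered pair of order types of $\sp(f)$ and $\sp(g)$, with the identical three-term splitting $fg=f|_{\alpha_1}g+r_f\,g|_{\beta_1}+r_f r_g$ at a pair $\alpha_1+\beta_1>\gamma$, and (T6) invoked exactly where the paper invokes it, namely when no such pair exists. The packaging differs in a way worth noting. The paper first develops the valued-vector-space picture (the set $P$ of elements with singleton $\sp$, leading terms, $\gamma$-terms, Proposition~\ref{emb}), proves that $P$ is a subgroup of $F^\times$ (Lemma~\ref{FP}), then establishes $\sp(fg)\subseteq\sp(f)+\sp(g)$ (Lemma~\ref{T6}) and finally the convolution formula for $\gamma$-terms (Lemma~\ref{FP6}); multiplicativity of $e$ then falls out. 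You instead define the coefficients directly as residues of $(f-f|_\gamma)/\tau^\gamma$ — which works because the lift $C$ of the residue field makes $F$ a Hahn space automatically, so you never need Lemma~\ref{p}(iv) or the subgroup structure of $P$ — and you fuse Lemmas~\ref{T6} and \ref{FP6} into one induction proving $e(fg)=e(f)e(g)$ coefficient by coefficient. This is a genuine streamlining, at the cost of the paper's more general vector-space statement (Proposition~\ref{emb}), which has independent interest. Two small blemishes, neither a gap: your dichotomy in the limit case is worded with ``no pair with $\alpha+\beta\ge\gamma$'' but the raising argument (using that some support has no largest element) really belongs to the ``otherwise'' branch, where it upgrades $\alpha_1+\beta_1\ge\gamma$ to a strict inequality; and the identity $f=f|_{\alpha_1}+c_{\alpha_1}(f)\tau^{\alpha_1}$ used in the peeling step deserves the one-line justification via $\sp(f-f|_{\alpha_1})=\sp(f)^{\ge\alpha_1}$ and $\sp(h)\ni v(h)$ for $h\ne0$.
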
 

\noindent
In the next section we prove some lemmas in the more general setting of valued vector spaces over $C$. 
In the last section we prove the theorem above.

 I thank the referees for their comments. 

\section{Truncation in Hahn spaces} 

\medskip\noindent
In this section $C$ is a field and $E$ a vector space over $C$. 
A {\em valuation\/} on $E$ is a surjective map
$v: E \to \Gamma_{\infty}$ such that for all $f,g\in E$ and $c\in C^\times$ we have
$v(f)=\infty\Leftrightarrow f=0$, $v(cf)= v(f)$, and $v(f+g)\ge \min(vf, vg)$; here and in the rest of this section
 $\Gamma$ is a linearly ordered set, and $\Gamma_{\infty}:= \Gamma\cup\{\infty\}$ where $\infty\notin \Gamma$, and the linear ordering of $\Gamma$ is extended to a linear ordering of $\Gamma_{\infty}$ by requiring
 that $\gamma< \infty$ for all $\gamma\in \Gamma$. We let $f,g$ range over $E$ and $\alpha, \beta, \gamma$ over $\Gamma$.

\medskip\noindent
Let now $E=(E,\Gamma, v)$ be a {\em valued\/}  vector space, that is, the vector space $E$ over $C$ comes equipped with a valuation $v:  E \to \Gamma_{\infty}$. We set 
\begin{align*}  f\preceq g\ &:\Leftrightarrow\ vf\ge vg,\qquad f\asymp g\ :\Leftrightarrow\ vf=vg,\\
f\prec g\ &:\Leftrightarrow\ vf>vg,\qquad f\sim g\ :\Leftrightarrow\ f-g\prec f.
\end{align*}
so $\asymp$ and $\sim$ are equivalence relations on $E$ and $E\setminus \{0\}$, respectively, and if
$f\sim g$, then $f\asymp g$. 
From \cite[2.3]{ADH} we recall that $E$ is said to be a {\em Hahn space} if for all $f,g\ne 0$  with
$f\asymp g$ there is a $c\in C^\times$ such that $f\sim cg$. 

The Hahn space $C[[t^\Gamma]]$ will play a special role:
except for the lack of a product operation it is defined just as the Hahn field $C((t^\Gamma))$; in particular, the wellordered subset $\supp f$ of $\Gamma$ and $v_t(f)\in \Gamma_{\infty}$ for $f\in C[[t^\Gamma]]$ are defined in the same way.

\bigskip\noindent
Let a map $(f,\gamma)\mapsto f|_{\gamma} :  E\times \Gamma \to E$ 
be given satisfying the analogues of (T1)--(T4), that is,
 for all $f,g$, all $c\in C$, and all $\alpha, \beta$:\begin{enumerate}
\item[(t1)] $v\big(f-(f|_{\alpha})\big)\ge \alpha$;
\item[(t2)] $v(f)\ge \alpha\ \Rightarrow\ f|_{\alpha}=0$;
\item[(t3)] $\beta> \alpha\ \Rightarrow\ (f|_{\alpha})|_{\beta}=f|_{\alpha}$;
\item [(t4)]$(f+g)|_{\alpha}=(f|_{\alpha})+(g|_{\alpha})$ and $(cf)|_{\alpha}=c(f|_{\alpha})$;
\end{enumerate} 

\noindent
The obvious truncation map on the Hahn space 
$E=C[[t^\Gamma]]$ over $C$ satisfies (1)--(4). 

\medskip\noindent
What conditions on the map
$(f,\gamma) \mapsto f|_{\gamma}$ yield the existence of an embedding $e: E \to C[[t^\Gamma]]$ of vector spaces over $C$ such that $v_t\big(e(f)\big)=v(f)$ and $e(f|_{\gamma})=e(f)|_{\gamma}$ for all $f$ and $\gamma$? An obviously necessary condition is that $E$ is a Hahn space. 

We set $\sp(f):= \{\gamma:\ v\big(f-(f|_{\gamma})\big)=\gamma\}$. Note that for the truncation map on the Hahn space $E=C[[t^\Gamma]]$ we have $\sp(f)=\supp(f)$. Thus another necessary condition for the existence of an embedding
$E\to C[[t^\Gamma]]$ as above is that $\sp(f)$ is wellordered for all $f$. Proposition~\ref{emb} below says that these two necessary conditions are together also sufficient.

\begin{lemma}\label{h1} For all $f,g, \alpha, \beta, \gamma$: \begin{enumerate}
\item[(i)] if $f|_{\gamma}\ne 0$, then $f\sim f|_{\gamma}$;
\item[(ii)]  if $\beta\leqslant \alpha$, then $(f|_{\alpha})|_{\beta}=f|_{\beta}$;
\item[(iii)] if $vf=\alpha$, then $\alpha= \min \sp(f)$; in particular, if $f\ne 0$, then $\sp(f)\ne \emptyset$.
\end{enumerate}
\end{lemma}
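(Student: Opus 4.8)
The plan is to derive all three parts directly from the axioms (t1)--(t4), with the only mild subtlety being that truncation is \emph{idempotent}. I would prove (ii) first (it depends on neither (i) nor (iii)), obtain (i) from the special case $\alpha=\beta=\gamma$ of (ii), and prove (iii) independently. The single mechanism behind everything is the additive decomposition $f = f|_{\alpha} + (f-f|_{\alpha})$ together with the fact, from (t1), that the remainder $f-f|_{\alpha}$ has valuation $\geq\alpha$, which lets one feed that remainder into (t2).

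For (ii): given $\beta\leq\alpha$, set $g:=f-f|_{\alpha}$, so $vg\ge\alpha\ge\beta$ by (t1), hence $g|_{\beta}=0$ by (t2); applying (t4) to $f=f|_{\alpha}+g$ yields $f|_{\beta}=(f|_{\alpha})|_{\beta}+g|_{\beta}=(f|_{\alpha})|_{\beta}$. Taking $\alpha=\beta=\gamma$ gives the idempotency identity $(f|_{\gamma})|_{\gamma}=f|_{\gamma}$. Now for (i): if $f|_{\gamma}\ne 0$, then $(f|_{\gamma})|_{\gamma}=f|_{\gamma}\ne 0$, so the contrapositive of (t2), applied with $f|_{\gamma}$ in place of $f$, forces $v(f|_{\gamma})<\gamma$; on the other hand $v(f-f|_{\gamma})\ge\gamma$ by (t1). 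Thus in $f=f|_{\gamma}+(f-f|_{\gamma})$ the two summands have strictly ordered valuations, so $vf=v(f|_{\gamma})$ and $f-f|_{\gamma}\prec f|_{\gamma}\asymp f$, i.e.\ $f\sim f|_{\gamma}$.

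For (iii): suppose $vf=\alpha$. Since $vf\ge\alpha$, (t2) gives $f|_{\alpha}=0$, hence $v(f-f|_{\alpha})=vf=\alpha$, so $\alpha\in\sp(f)$. If $\gamma<\alpha$, then $vf=\alpha>\gamma$, so again (t2) gives $f|_{\gamma}=0$ and $v(f-f|_{\gamma})=\alpha\ne\gamma$, whence $\gamma\notin\sp(f)$. Therefore $\sp(f)\subseteq\{\gamma:\gamma\ge\alpha\}$ and contains $\alpha$, so $\alpha=\min\sp(f)$. The ``in particular'' clause follows because $f\ne 0$ forces $vf\in\Gamma$, and then $vf$ witnesses $\sp(f)\ne\emptyset$. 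The only step that is not pure bookkeeping is extracting the idempotency identity from (t2) and (t4) inside the proof of (ii); once that is available the remaining arguments are immediate, so I do not anticipate a genuine obstacle.
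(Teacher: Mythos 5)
Your proof is correct and follows essentially the same route as the paper: (ii) comes from applying (t2) and (t4) to the remainder $f-f|_{\alpha}$, and (i) and (iii) are direct consequences of (t1) and (t2). The only cosmetic difference is that you prove (ii) first and extract (i) via the idempotency identity $(f|_{\gamma})|_{\gamma}=f|_{\gamma}$, whereas the paper gets (i) directly (if $v(f|_{\gamma})\ge\gamma$ then $vf\ge\gamma$ and (t2) would force $f|_{\gamma}=0$); both are equally valid.
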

\begin{proof} (i) is clear from (t1) and (t2). Suppose $\beta\leqslant \alpha$.
Then $v(f-f|\alpha)\geqslant \alpha\geqslant \beta$, so by (t2) we have
$(f-f|_\alpha)|_\beta=0$, that is, $(f|_{\alpha})|_{\beta}=f|_{\beta}$. As to (iii), 
suppose $v(f)=\alpha$. Then $\sp(f)\geqslant \alpha$ by (i), and $f|_{\alpha}=0$ by (t2), so $\alpha\in \sp(f)$.
\end{proof}

\begin{lemma}\label{lemm2} The sets $\sp(f)$ have the following properties: \begin{enumerate}
\item[(i)] $\sp(f|_\alpha)=\sp(f)^{<\alpha}$;  
\item[(ii)] $\sp(f+g)\ \subseteq\  \sp(f)\cup \sp(g)$, and $\sp(cf)=\sp(f)$ for $c\in C^\times$;
\item[(iii)] $\sp(f-f|_{\alpha})=\sp(f)^{\ge \alpha}$;
\item[(iv)] if $v(f-f|_{\alpha})=\beta$, then $\beta\in \sp(f)$;
\item[(v)] if $\alpha> \sp(f)$, then $f|_{\alpha}=f$, and otherwise $f|_{\alpha}=f|_{\beta}$ for a unique $\beta\in \sp(f)$ $($and $\alpha\le \beta$ for this $\beta)$. 
\end{enumerate}
\end{lemma}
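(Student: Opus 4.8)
The plan is to reduce every part to two ingredients: the ultrametric inequality from the valuation axioms, used in the sharp form ``if $vx\ge\gamma$, $vy\ge\gamma$, and $v(x+y)=\gamma$, then $vx=\gamma$ or $vy=\gamma$'', and a normal form for iterated truncations, namely $(f|_\alpha)|_\gamma=f|_\gamma$ for $\gamma\le\alpha$ (Lemma~\ref{h1}(ii)) and $(f|_\alpha)|_\gamma=f|_\alpha$ for $\gamma\ge\alpha$ (by (t3), with the boundary case $\gamma=\alpha$ again from Lemma~\ref{h1}(ii)). Together with (t4), these let me rewrite $h|_\gamma$, for each $h$ that occurs ($f|_\alpha$, $cf$, $f+g$, and $f-f|_\alpha$), purely in terms of truncations of $f$ and $g$.

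For (i): if $\gamma\ge\alpha$ then $f|_\alpha-(f|_\alpha)|_\gamma=0$, so $\gamma\notin\sp(f|_\alpha)$; if $\gamma<\alpha$ then $f|_\alpha-(f|_\alpha)|_\gamma=(f-f|_\gamma)-(f-f|_\alpha)$, whose second summand has value $\ge\alpha>\gamma$ by (t1), so its value equals $v(f-f|_\gamma)$, giving $\gamma\in\sp(f|_\alpha)\iff\gamma\in\sp(f)$. For (ii): $\sp(cf)=\sp(f)$ is immediate from (t4) and $C^\times$-invariance of $v$, and from $(f+g)-(f+g)|_\gamma=(f-f|_\gamma)+(g-g|_\gamma)$ with both summands of value $\ge\gamma$ the sharp ultrametric inequality gives $\gamma\in\sp(f)\cup\sp(g)$. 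For (iii), with $h:=f-f|_\alpha$: when $\gamma<\alpha$ one computes $h|_\gamma=0$ while $vh\ge\alpha>\gamma$, so $\gamma\notin\sp(h)$; when $\gamma\ge\alpha$ one gets $h-h|_\gamma=f-f|_\gamma$, so $\gamma\in\sp(h)\iff\gamma\in\sp(f)$; hence $\sp(h)=\sp(f)^{\ge\alpha}$. Part (iv) then drops out: if $v(f-f|_\alpha)=\beta\in\Gamma$ then $f-f|_\alpha\ne 0$, so by Lemma~\ref{h1}(iii) $\beta=\min\sp(f-f|_\alpha)\in\sp(f)^{\ge\alpha}\subseteq\sp(f)$.

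For (v): if $\alpha>\sp(f)$ then (iii) gives $\sp(f-f|_\alpha)=\emptyset$, so $f-f|_\alpha=0$ by Lemma~\ref{h1}(iii); otherwise $\sp(f)^{\ge\alpha}=\sp(f-f|_\alpha)\ne\emptyset$, so $h:=f-f|_\alpha\ne 0$, and with $\beta:=vh$ Lemma~\ref{h1}(iii) gives $\beta=\min\sp(h)$, whence $\beta\in\sp(f)$ and $\beta\ge\alpha$. From $vh=\beta$ and (t2), $h|_\beta=0$; expanding $h|_\beta=f|_\beta-(f|_\alpha)|_\beta=f|_\beta-f|_\alpha$ (using $\beta\ge\alpha$) yields $f|_\alpha=f|_\beta$. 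For uniqueness, any $\beta'\in\sp(f)$ with $f|_{\beta'}=f|_\alpha$ satisfies $\beta'=v(f-f|_{\beta'})=v(f-f|_\alpha)=\beta$.

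I do not expect a genuine obstacle here: the work is entirely bookkeeping with (t1)--(t4) and Lemma~\ref{h1}. The two points that need a little care are the boundary case $\gamma=\alpha$ in the iterated-truncation normal form (where Lemma~\ref{h1}(ii), not (t3), is what applies) and the systematic use of the sharp ultrametric inequality rather than the plain one. It is worth flagging that, although (iv) and (v) mention $\min\sp$, none of this uses that $\sp(f)$ is wellordered: Lemma~\ref{h1}(iii) hands us the relevant minimum explicitly as a value of $v$.
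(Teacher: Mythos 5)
Your proof is correct and uses essentially the same ingredients and structure as the paper's (reduction to (t1)--(t4), Lemma~\ref{h1}, and the sharp ultrametric inequality), with only minor reorganization: you prove (iii) by a direct computation of $h-h|_\gamma$ rather than by combining (i) and (ii), and you obtain (iv) as a corollary of (iii) plus Lemma~\ref{h1}(iii) where the paper argues directly. The only nitpick is in (i): the intermediate claim that $v\big(f|_\alpha-(f|_\alpha)|_\gamma\big)$ \emph{equals} $v(f-f|_\gamma)$ is too strong when both values exceed $\gamma$, but the biconditional you actually need (value $=\gamma$ on one side iff on the other) is exactly what the sharp ultrametric inequality gives, so the argument stands.
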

\begin{proof} If  $\beta \geqslant \alpha$,
then $(f|_{\alpha})|_{\beta}=f|_{\alpha}$ by (t3) and Lemma~\ref{h1}(ii), so $\beta\notin \sp(f|_{\alpha})$.
Next, let $\beta\in \sp(f|_{\alpha})$. Then $\beta< \alpha$, so $(f|_{\alpha})|_{\beta}=f|_{\beta}$ by Lemma~\ref{h1}(ii), hence 
$v(f|_{\alpha}-f|_{\beta}))=\beta$, which together with $v\big(f-(f|_{\alpha})\big)\ge \alpha$ yields
$v(f-f|_{\beta})=\beta$, so $\beta\in \sp(f)$. Reversing this argument yields $\sp(f)^{<\alpha}\subseteq \sp(f|_{\alpha})$. 

As to (ii), note that  if $v\big(f-(f|_{\alpha})\big)>\alpha$ and $v\big(g-(g|_{\alpha})\big)>\alpha$, then $$v\big((f+g)-(f+g)|_{\alpha}\big)\ >\ \alpha.$$ 
As to (iii), $\sp(f-f|_{\alpha})\ge \alpha$ by (t1) and Lemma~\ref{h1}(iii), hence $\sp(f-f|_{\alpha})\subseteq \sp(f)^{\ge \alpha}$ by (i), (ii), (iii). 
It remains to note that $\sp(f)\subseteq \sp(f|_{\alpha})\cup \sp(f-f|_{\alpha})$ by (ii).
For (iv), assume $v(f-f|_{\alpha})=\beta$. Now $v(f-f|_{\beta})\ge\beta$, hence
$v(f|_{\alpha}-f|_{\beta})\ge\beta$, and so $\big(f|_{\alpha}-f|_{\beta}\big)|_{\beta}=0$ by (t2), that is,
$f|_{\alpha}-f|_{\beta}=0$ by (t3) , (t4), and Lemma~\ref{h1}(ii), and thus $v(f-f|_{\beta})=\beta$, in other words, $\beta\in \sp(f)$.
As to (v), from (iii) we obtain: $\alpha>\sp(f)\ \Leftrightarrow\ f|_{\alpha}=f$. Suppose $f|_{\alpha}\ne f$. Set $\beta:=v(f-f|_{\alpha})$. Then $\beta\in \sp(f)$
by (iv), whose proof also gives $f|_{\alpha}=f|_{\beta}$. Uniqueness is clear. 
\end{proof} 
 
\noindent 
Set $P=\{f:\ \sp(f) \text{ is a singleton}\}$. Then $C^\times P\subseteq P$, and if
$f\in P$, then $f|_{\alpha}=f$ for $\alpha>vf$ by Lemma~\ref{lemm2}(iii), and $f|_{\alpha}=0$ for $\alpha\le vf$ by (t2). 

\begin{lemma} \label{p} The set $P$ has the following properties: \begin{enumerate}
\item[(i)] if $\sp(f)=\{\gamma_1,\dots, \gamma_n\}$ with $\gamma_1<\cdots < \gamma_n$, then there are $f_i\in P$ with $vf_i=\gamma_i$ for $i=1,\dots,n$, such that $f=f_1+\cdots + f_n$; 
\item[(ii)] $\sp(f)$ is finite iff $f\in \sum P:= \{f_1+\cdots + f_n:\ f_1,\dots, f_n\in P\}$;
\item[(iii)] if $f, g\in P\cup\{0\}$ and $f\asymp g$, then $f+g\in P\cup\{0\}$;
\item[(iv)] if $E$ is a Hahn space and $f,g\in P$, $f\asymp g$, then $f=cg$ for some $c\in C^\times$.
\end{enumerate}
\end{lemma}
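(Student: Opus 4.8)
The plan is to prove the four items essentially in the stated order, with (i) doing most of the work and the rest following quickly. For (i), I would argue by induction on $n$. The case $n=1$ is trivial (take $f_1=f$). For $n\geq 2$, let $\gamma_1<\cdots<\gamma_n$ enumerate $\sp(f)$. The natural candidate to split off is $f_1:=f|_{\gamma_2}$: by Lemma~\ref{lemm2}(i) we have $\sp(f_1)=\sp(f)^{<\gamma_2}=\{\gamma_1\}$, so $f_1\in P$, and by Lemma~\ref{h1}(iii) together with the remark after the definition of $P$, $vf_1=\gamma_1$. Then set $g:=f-f_1=f-f|_{\gamma_2}$; by Lemma~\ref{lemm2}(iii), $\sp(g)=\sp(f)^{\geq\gamma_2}=\{\gamma_2,\dots,\gamma_n\}$, which has $n-1$ elements, so the induction hypothesis applies to $g$ and yields $g=f_2+\cdots+f_n$ with $f_i\in P$, $vf_i=\gamma_i$. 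Then $f=f_1+g=f_1+\cdots+f_n$ as required.

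For (ii): if $f\in\sum P$, write $f=f_1+\cdots+f_n$ with $f_i\in P$; then $\sp(f_i)$ is a singleton, so $\sp(f)\subseteq\bigcup_i\sp(f_i)$ is finite by Lemma~\ref{lemm2}(ii). (Here one should also note $\sp(0)=\emptyset$ is finite, covering the empty-sum case.) Conversely, if $\sp(f)$ is finite, then either $f=0$ (by Lemma~\ref{h1}(iii), $f\neq 0$ forces $\sp(f)\neq\emptyset$, so $\sp(f)=\emptyset$ gives $f=0\in\sum P$ as the empty sum), or $\sp(f)=\{\gamma_1,\dots,\gamma_n\}$ is a nonempty finite set and (i) writes $f$ as a sum of elements of $P$.

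For (iii): suppose $f,g\in P\cup\{0\}$ with $f\asymp g$. If either is $0$, then both are $0$ (since $f\asymp g$ means $vf=vg$, and $v$ of a nonzero element is in $\Gamma$ while $v0=\infty$), and $f+g=0\in P\cup\{0\}$. So assume $f,g\in P$ with $vf=vg=\gamma$, say. Then $\sp(f)=\sp(g)=\{\gamma\}$. Now $\sp(f+g)\subseteq\{\gamma\}$ by Lemma~\ref{lemm2}(ii), so either $\sp(f+g)=\emptyset$, forcing $f+g=0$ by Lemma~\ref{h1}(iii), or $\sp(f+g)=\{\gamma\}$, giving $f+g\in P$; either way $f+g\in P\cup\{0\}$.

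For (iv): assume $E$ is a Hahn space and $f,g\in P$ with $f\asymp g$. Since $f,g\neq 0$ and $f\asymp g$, the Hahn space property gives $c\in C^\times$ with $f\sim cg$, i.e. $f-cg\prec f$, i.e. $v(f-cg)>vf=vg=v(cg)$. Now $cg\in C^\times P\subseteq P$, so $f$ and $-(-cg)$... more directly: by (iii) applied to $f$ and $-cg$ (both in $P$, both $\asymp$ each other), $f-cg=f+(-cg)\in P\cup\{0\}$; but $v(f-cg)>vf=\min\sp(f-cg)$ if $f-cg\neq 0$ (Lemma~\ref{h1}(iii)) would contradict $\sp(f-cg)$ being nonempty with minimum $\le vf$... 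I should instead argue: if $f-cg\neq 0$, then $\gamma':=v(f-cg)\in\sp(f-cg)$ by Lemma~\ref{h1}(iii), but also $\sp(f-cg)\subseteq\sp(f)\cup\sp(cg)=\{\gamma\}$ by Lemma~\ref{lemm2}(ii), forcing $\gamma'=\gamma$, contradicting $v(f-cg)>\gamma$. Hence $f-cg=0$, i.e. $f=cg$.

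\textbf{Main obstacle.} I expect the only real content is in (i), and there the delicate point is choosing the right element of $P$ to split off and correctly identifying its $\sp$ and valuation; once (i) is in hand, (ii)--(iv) are short bookkeeping using Lemma~\ref{lemm2}(ii), Lemma~\ref{h1}(iii), and (in (iv)) the Hahn space hypothesis combined with (iii). The one subtlety to be careful about throughout is the handling of $0$ and of empty sums/empty $\sp$, which is why $P\cup\{0\}$ rather than $P$ appears in (iii).
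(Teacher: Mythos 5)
Your proposal is correct and follows essentially the same route as the paper: (i) by induction, splitting off $f_1=f|_{\gamma_2}$ and using Lemma~\ref{lemm2}(i),(iii); (ii) from (i) and Lemma~\ref{lemm2}(ii); (iii) from $\sp(f+g)\subseteq\{\gamma\}$; and (iv) by taking $c$ with $f\sim cg$ and showing $\sp(f-cg)=\emptyset$. The only cosmetic issue is the abandoned false start in your argument for (iv), which should be cleaned up, but the argument you settle on matches the paper's.
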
 
\begin{proof} Item (i) is obviously true for $n=0$ and $n=1$. Let $n>1$ with $\sp(f)$ as in the hypothesis of (i). 
Then $\sp(f|_{\gamma_2})=\{\gamma_1\}$ by Lemma~\ref{lemm2}(i). We set $f_1:=f|_{\gamma_2}$.
Then $f=f_1+f^*$ with $\sp(f^*)=\{\gamma_2,\dots, \gamma_n\}$ by Lemma~\ref{lemm2}(iii).
Assuming inductively that $f^*=f_2+\cdots + f_n$ with $f_i\in P$ and $vf_i=\gamma_i$ for $i=2,\dots,n$,
we obtain the desired result for $f$. 

Item (ii) follows from (i) using Lemma~\ref{lemm2}(ii). Lemma~\ref{lemm2} also yields (iii). 

Suppose $E$ is a Hahn space and $f,g\in P,\ v(f)=v(g)=\alpha$. Take $c\in C^\times$ such that $f\sim cg$. Then
$v(f-cg)>\alpha$, so $\sp(f-cg)>\alpha$, but also $\sp(f-cg)\subseteq \{\alpha\}$, so $\sp(f-cg)=\emptyset$, and thus
$f=cg$. 
\end{proof}

\noindent
Suppose now that $\sp(f)$ is wellordered for all $f$. Then there is for each $\alpha$ an $f\in P$ with $vf=\alpha$.
(Proof: Take any $f$ with $vf=\alpha$, and suppose $f\notin P$. Then we have $\beta>\alpha$ such that $(\alpha,\beta)\cap \sp(f)=\emptyset$, and so $f|_\beta\in P$ and $v(f|_{\beta})=\alpha$.)  

The {\em leading term $\fd(f)\in P\cup \{0\}$} of $f$ is  defined as follows:
$\fd(f):=0$ if $f=0$; $\fd(f):=f$ if $f\in P$. Let $f\notin P$, $vf=\alpha$. Then
$\fd(f):=f|_\beta\in P$ if $\beta>\alpha$ and $(\alpha,\beta)\cap \sp(f)=\emptyset$; this is unambiguous: if
$\alpha < \beta < \gamma$ and $(\alpha,\gamma)\cap \sp(f)=\emptyset$, then $f|_{\beta}-f|_{\gamma}=(f-f|_\gamma)|_{\beta}$ and $v\big(f-f|_{\gamma}\big) \ge \beta$, so $f|_{\beta}=f|_{\gamma}$ by (t2).

Note that for $f\ne 0$ we have $f\sim \fd(f)$ by Lemma~\ref{h1}(i).  

\medskip\noindent
We define the
$\gamma$-term $f\gamma\in P\cup \{0\}$ of $f$ as follows: if $\gamma\notin \sp(f)$, then $f{\gamma}:=0$, and if
$\gamma\in \sp(f)$, then $f{\gamma}:=\fd(f-f|_{\gamma})$. Some easily established facts: \begin{itemize}
\item if $\gamma\in \sp(f)$, then $v(f\gamma)=\gamma$;
\item $v(f-f|_\gamma - f\gamma)>\gamma$; 
\item if $v(f-f|_{\gamma}-g)>\gamma$, $\gamma\in \sp(f)$, and $g\in P\cup\{0\}$, then $g=f\gamma$;
\item $(f|_\alpha){\gamma}=f{\gamma}$ if $\gamma<\alpha$ and $(f|_\alpha){\gamma}=0$ if $\gamma\ge \alpha$;
\item $(f+g)\gamma=f\gamma + g\gamma$, and $(cf)\gamma=c\cdot f{\gamma}$;
\item the (additive) map $f\mapsto (f\gamma): E\to  E^\Gamma$ is injective. 
\end{itemize}  
For the crucial fifth item, use the first three items and Lemma~\ref{p}(iii). The last item follows from the first and the fifth.

\begin{prop} \label{emb} Suppose $E$ is a Hahn space and $\sp(f)$ is wellordered for all $f$. Then there exists an embedding
$e: E \to C[[t^\Gamma]]$ of vector spaces over $C$ such that $v_t\big(e(f)\big)=v(f)$ and $e(f|_{\alpha})=e(f)|_{\alpha}$ for all $f$ and $\alpha$.
\end{prop}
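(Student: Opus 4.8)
The plan is to define $e(f)$ coordinatewise by declaring that the $\gamma$-coefficient of $e(f)$ should be the scalar recording the $\gamma$-term $f\gamma$. More precisely, fix for each $\gamma$ an element $\tau_\gamma\in P$ with $v(\tau_\gamma)=\gamma$, which exists by the remark preceding the definition of $\fd$. By Lemma~\ref{p}(iv), for each $\gamma$ and each $f$ with $\gamma\in\sp(f)$ there is a unique $c_\gamma(f)\in C^\times$ with $f\gamma = c_\gamma(f)\,\tau_\gamma$; set $c_\gamma(f):=0$ when $\gamma\notin\sp(f)$, i.e.\ when $f\gamma=0$. Then define
\[
e(f)\ :=\ \sum_{\gamma} c_\gamma(f)\, t^\gamma.
\]
The support of this formal sum is $\{\gamma: \gamma\in\sp(f)\}=\sp(f)$, which is wellordered by hypothesis, so $e(f)\in C[[t^\Gamma]]$ and $\supp e(f)=\sp(f)$.

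Next I would check the required properties. That $e$ is $C$-linear is immediate from the bulleted identities $(f+g)\gamma=f\gamma+g\gamma$ and $(cf)\gamma=c\cdot f\gamma$ together with uniqueness of the scalars $c_\gamma$: these give $c_\gamma(f+g)=c_\gamma(f)+c_\gamma(g)$ and $c_\gamma(cf)=c\,c_\gamma(f)$. Injectivity of $e$ is exactly the last bulleted item (the map $f\mapsto(f\gamma)$ is injective), since $e(f)=0$ forces all $f\gamma=0$. For the valuation: if $f\ne 0$ then by Lemma~\ref{h1}(iii) $vf=\min\sp(f)=\min\supp e(f)=v_t(e(f))$, and if $f=0$ both sides are $\infty$. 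For compatibility with truncation, use the bulleted item $(f|_\alpha)\gamma=f\gamma$ for $\gamma<\alpha$ and $(f|_\alpha)\gamma=0$ for $\gamma\ge\alpha$, which translates directly into $c_\gamma(f|_\alpha)=c_\gamma(f)$ for $\gamma<\alpha$ and $c_\gamma(f|_\alpha)=0$ otherwise; hence $e(f|_\alpha)=\sum_{\gamma<\alpha}c_\gamma(f)t^\gamma = e(f)|_\alpha$, as desired. (One should also note $e$ is the identity on $C$ up to the identification: for $c\in C^\times$, $\sp(c\cdot 1_E)$ is a singleton once one fixes $\tau_0=1$-type normalization; but the statement only asks for a $C$-linear embedding, so it suffices that $e$ is $C$-linear.)

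The step I expect to be the main obstacle is verifying that the bulleted facts about the $\gamma$-term behave well enough to make everything coherent — in particular that the "crucial fifth item" $(f+g)\gamma=f\gamma+g\gamma$ really holds, and that the scalar $c_\gamma(\cdot)$ is well-defined and additive. Fortunately the excerpt has already done this work: the fifth bulleted item is asserted (with its proof sketched via the first three items and Lemma~\ref{p}(iii)), and Lemma~\ref{p}(iv) supplies the uniqueness of scalars in the Hahn space. So the remaining proof is essentially bookkeeping: assemble the coefficient functions $c_\gamma$, read off linearity, injectivity, the valuation identity, and truncation-compatibility from the itemized list, and confirm that $\sp(f)=\supp e(f)$ guarantees $e$ lands in $C[[t^\Gamma]]$. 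The genuinely non-formal inputs — wellorderedness of $\sp(f)$ (so that $e(f)$ is a legitimate Hahn series) and the Hahn-space property (so that each term is a $C$-multiple of the chosen $\tau_\gamma$) — are precisely the two hypotheses of the proposition, which is the expected state of affairs.
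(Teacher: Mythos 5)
Your proof is correct and is essentially the paper's own argument: the paper also fixes elements $p_\gamma\in P$ with $v(p_\gamma)=\gamma$, uses Lemma~\ref{p}(iv) to write $P\cup\{0\}=Cp_\gamma$ fiberwise, sets $e(f)=\sum_\gamma e_P(f\gamma)$ (your $\sum_\gamma c_\gamma(f)t^\gamma$), and reads off linearity, injectivity, value-preservation and truncation-compatibility from the same bulleted facts about $\gamma$-terms. No substantive difference.
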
 
\begin{proof} For each $\gamma$ we take an element $p_{\gamma}\in P$ with $v(p_\gamma)=\gamma$. 
Then $P\cup \{0\}$ is the set of all products $cp_{\gamma}$ with $c\in C$, by  Lemma~\ref{p}(iv).
We define a bijection $e_P: P\cup\{0\}\to Ct^\Gamma$ by $e_P(cp_\gamma)=ct^\gamma$ for $c\in C$.
Then $e_P(cf)=ce_P(f)$ and $e_P(f+g)=e_P(f)+e_P(g)$ for $f,g\in Cp_{\gamma}$, $c\in C$. We
extend $e_P$ to a map 
$$e\ :\  E \to C[[t^\Gamma]], \qquad e(f)\ :=\  \sum_{\gamma} e_P(f\gamma).$$
The last three facts  about $\gamma$-terms from the list above show that $e$ is an embedding of valued vector spaces over $C$ with $e(f|_{\alpha})=e(f)|_{\alpha}$ for all $f$ and $\alpha$.
\end{proof} 

\noindent
Let $E$ and $e$ be as in the proposition. Then $\sp(f)=\supp e(f)$, and $e$ maps $P\cup \{0\}$ bijectively onto $C t^\Gamma$. Transfinite induction on the ordinals of the wellordered sets $\sp(f)$ shows that $e$ is uniquely determined by its restriction to $P$.

\section{Proof of the Theorem} 

\noindent
Let the field $F$ with the valuation $v: F \to \Gamma\cup\{\infty\}$ and the lift $C$ of the residue field be as in the Introduction. We assume given a map
$$(f,\alpha)\mapsto f|_{\alpha}\ :\ F\times \Gamma\to F$$
satisfying (T1)--(T5) for now. (Later we add (T6)--(T8).) Throughout $f$ and $g$ range over $F$ and $\alpha,\beta, \gamma$ over $\Gamma$. The previous section yields the set
$P\subseteq F$, and for all $f$ and $\gamma$ the leading term $\fd(f)$ of $f$ and the $\gamma$-term $f\gamma$ of $f$. 

\begin{lemma}\label{FP} Suppose that that for all $f\in P$ and all $g,\gamma$ with $\sp(g)<\gamma$
we have $\sp(fg)< vf + \gamma$. Then: \begin{enumerate}
\item[(i)] $P$ is a subgroup of $F^\times$;
\item[(ii)] for all $f\in P$ and all $g$ we have $\sp(fg) = vf +\sp(g)$.
\end{enumerate} 
\end{lemma}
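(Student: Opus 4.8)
The hypothesis is a ``one-sided'' sub-multiplicativity statement for elements of $P$, and we want to upgrade it to: $P$ is a subgroup of $F^\times$ and $\sp(fg)=vf+\sp(g)$ for $f\in P$. The plan is to establish (ii) first in the weak form $\sp(fg)\subseteq vf+\sp(g)$ using the hypothesis, then bootstrap to get that $P$ is closed under multiplication and inverses (which is (i)), and finally feed (i) back in to obtain the reverse inclusion in (ii).

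\textbf{Step 1: $\sp(fg)\le vf+\gamma$ whenever $\sp(g)<\gamma$, restated as an inclusion.} Fix $f\in P$ and $g\in F$. For each $\gamma>\sp(g)$ the hypothesis gives $\sp(fg)<vf+\gamma$. Taking the intersection over all such $\gamma$ — here is where wellorderedness of $\sp(g)$ (T5) matters, since it guarantees that for $\delta>\sp(g)$ we can find $\gamma$ with $\sp(g)<\gamma\le\delta$, via Lemma~\ref{lemm2}(v) and the structure of $\sp(g)$ — we get $\sp(fg)\le vf+\delta$ for every $\delta>\sp(g)$, hence $\sp(fg)\subseteq vf+\sp(g)$ provided the ordered set $\Gamma$ has no element strictly between $vf+\sp(g)$ and its ``successors''; more carefully, one argues directly that if $\eta\in\sp(fg)$ then $\eta-vf\le\gamma$ for all $\gamma>\sp(g)$, so $\eta-vf$ is $\le\sup$-like and cannot exceed $\sp(g)$, giving $\eta\in vf+(\text{initial segment bounded by }\sp(g))$. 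Combined with $v(fg)=vf+vg=vf+\min\sp(g)$ (using Lemma~\ref{h1}(iii) and that $v$ is a field valuation) and Lemma~\ref{h1}(iii) again, this pins down $\min\sp(fg)=vf+\min\sp(g)$ and yields the inclusion $\sp(fg)\subseteq vf+\sp(g)$.

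\textbf{Step 2: $P$ is closed under products, and $\tau$-translates behave.} If $f\in P$ (so $\sp(f)=\{vf\}$, a singleton), apply Step 1 with $g\in P$, $\sp(g)=\{vg\}$: then $\sp(fg)\subseteq vf+\{vg\}=\{vf+vg\}$, and since $fg\ne0$ we get $\sp(fg)=\{vf+vg\}$ by Lemma~\ref{h1}(iii), i.e.\ $fg\in P$. For inverses: given $f\in P$, write $h:=f^{-1}\in F^\times$ and suppose $\sp(h)$ were not a singleton; pick $\gamma\in\sp(h)$ with $\gamma>vh$ and apply Step 1 to $f\cdot(h-h|_\gamma)$ — but $h-h|_\gamma$ has $\sp=\sp(h)^{\ge\gamma}$ by Lemma~\ref{lemm2}(iii), and $f\cdot h = 1\in P$ forces, via $fh = f\cdot h|_\gamma + f\cdot(h-h|_\gamma)$ together with $\sp(f\cdot h|_\gamma)\subseteq vf+\sp(h|_\gamma)<vf+\gamma$ from Step 1 and the hypothesis, that $v(f(h-h|_\gamma)) = v(1 - f h|_\gamma)$. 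Chasing valuations: $fh|_\gamma$ is a nonzero truncation of $1$, so $1-fh|_\gamma = 1 - (fh)|_{\text{something}}$... the cleanest route is to show $\sp(h)$ is a singleton by noting $0 = \sp(1) = \sp(f\cdot f^{-1})$ and using $\sp(f g)\supseteq$ the reverse inclusion once we have it — so in fact Steps 2 and 3 are intertwined. I would instead argue: $1 = f\cdot f^{-1}$, and if $f^{-1}\notin P$ then $f^{-1}$ has at least two spectrum points $vh<\gamma$; set $g_0 := (f^{-1})|_\gamma\in P$ by minimality of the gap, then $f g_0\in P$ with $v(fg_0)=0$, so $fg_0\sim 1$ (Lemma~\ref{h1}(i)), hence $\sp(1-fg_0)>0$; but $1-fg_0 = f(f^{-1}-g_0)=f\cdot(f^{-1}-(f^{-1})|_\gamma)$, which has $v = vf + \gamma > 0$ and $\sp\subseteq vf+\sp(f^{-1})^{\ge\gamma}$ by Step 1 — all consistent, so one needs the reverse bound to get a contradiction.

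\textbf{The main obstacle, and how to resolve it.} The genuine difficulty is the reverse inclusion $vf+\sp(g)\subseteq\sp(fg)$ for $f\in P$, which is exactly what forces $P$ to be closed under inverses. The trick is to use that $f\in P$ implies $vf+vf'\in$ the relevant set for a candidate inverse $f'$, so that from $\sp(ff^{-1})=\{0\}$ and $\sp(ff^{-1})\subseteq vf+\sp(f^{-1})$ we would want $\sp(f^{-1})\supseteq\{-vf\}$ — automatic — but also need $\sp(f^{-1})$ has nothing \emph{above} $-vf$. Suppose $\gamma\in\sp(f^{-1})$ with $\gamma>-vf=v(f^{-1})$. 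Then by Step 1, $\sp\big(f\cdot(f^{-1}-(f^{-1})|_\gamma)\big)\subseteq vf+\sp(f^{-1})^{\ge\gamma}$, and the minimum of the right side is $vf+\gamma>0$ since $\gamma>-vf$. Meanwhile $f\cdot(f^{-1}-(f^{-1})|_\gamma) = 1 - f\cdot(f^{-1})|_\gamma$, and $(f^{-1})|_\gamma\in P$ with $v = -vf$ (as $\gamma>-vf$, so the truncation is nonzero), whence $f\cdot(f^{-1})|_\gamma\in P$ with valuation $0$, so $f\cdot(f^{-1})|_\gamma\sim 1$, i.e.\ $v(1 - f(f^{-1})|_\gamma)>0$ — consistent but not yet contradictory. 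To close: iterate. Having $f(f^{-1})|_\gamma = 1 - u$ with $vu = vf+\gamma>0$ and $u\in\sum P$ with $\sp(u)\subseteq vf+\sp(f^{-1})^{\ge\gamma}$ finite (since $\sp(f^{-1})$ wellordered and we take the part $\ge\gamma$ lying below $\sp(f^{-1})$'s next gap), repeat the division argument: the point is that $f^{-1} = (f^{-1})|_\gamma \cdot (1-u)^{-1}\cdot$(unit correction) cannot have $\sp$ extending arbitrarily high because $\sp(f^{-1})$ is wellordered, yet each step of peeling produces a strictly higher spectrum point — a standard wellordering induction. This wellordering argument (the only place T5 is essential here) is the crux; once $P$ is a group, (ii)'s forward inclusion is Step 1 and its reverse follows by applying the forward inclusion to $f^{-1}$ and $fg$ in place of $f$ and $g$: $\sp(g)=\sp(f^{-1}(fg))\subseteq -vf+\sp(fg)$, i.e.\ $vf+\sp(g)\subseteq\sp(fg)$, completing the proof.
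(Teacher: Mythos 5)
There are two genuine gaps. The first is in Step 1: the argument given does not prove the inclusion $\sp(fg)\subseteq vf+\sp(g)$. From the hypothesis alone you get only that for each $\eta\in\sp(fg)$ and each $\gamma>\sp(g)$ one has $\eta<vf+\gamma$, hence (as you correctly note) $\eta\le vf+\beta$ for \emph{some} $\beta\in\sp(g)$. That bounds the elements of $\sp(fg)$ above by elements of $vf+\sp(g)$; it does not place $\eta$ \emph{in} $vf+\sp(g)$ — e.g.\ nothing so far excludes $vf+1\in\sp(fg)$ when $\sp(g)=\{0,2\}$. Your phrases ``$\le\sup$-like'' and ``$\eta\in vf+(\text{initial segment bounded by }\sp(g))$'' assert exactly this weaker statement and then silently upgrade it. The inclusion is true, but to get it you must also apply the hypothesis to \emph{truncations} of $g$: if $\beta_0\in\sp(g)$ is least with $\eta\le vf+\beta_0$ and $\eta\notin vf+\sp(g)$, then $\eta-vf>\sp(g|_{\beta_0})$, so the hypothesis gives $\sp(f\cdot g|_{\beta_0})<\eta$, while $\sp\big(f\cdot(g-g|_{\beta_0})\big)\ge vf+\beta_0>\eta$, contradicting $\sp(fg)\subseteq\sp(f\cdot g|_{\beta_0})\cup\sp\big(f\cdot(g-g|_{\beta_0})\big)$ (Lemma~\ref{lemm2}(ii)). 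The paper avoids this issue by a transfinite induction on the order type of $\sp(g)$, proving $\sp(fg)^{\le vf+\beta}=vf+\sp(g)^{\le\beta}$ for each $\beta\in\sp(g)$, which delivers both inclusions of (ii) at once. (Your Step 2 product closure is unaffected: the weak bound plus $\min\sp(fg)=v(fg)=vf+vg$ already forces $\sp(fg)=\{vf+vg\}$.)

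The second gap is that the proof of $f^{-1}\in P$ is never completed: you state that your computation is ``consistent but not yet contradictory'' and defer to an unexecuted ``iterate / wellordering induction''. In fact you are one observation away. First check $1\in P$ (the paper multiplies $1=\fd(1)+h$, $h\prec 1$, by $f\in P$ and compares spectra to get $h=0$). Then with $g_0:=\fd(f^{-1})\in P$ you have $fg_0\in P$, $v(fg_0)=0$, so $\sp(1-fg_0)\subseteq\sp(1)\cup\sp(fg_0)=\{0\}$ by Lemma~\ref{lemm2}(ii); but $v(1-fg_0)=v\big(f(f^{-1}-g_0)\big)>0$, and the minimum of the spectrum of a nonzero element is its valuation (Lemma~\ref{h1}(iii)), so $\sp(1-fg_0)=\emptyset$, i.e.\ $fg_0=1$ and $f^{-1}=g_0\in P$. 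This is essentially the paper's argument and requires no iteration. Your closing remark — that once $P$ is a group the reverse inclusion in (ii) follows by applying the forward inclusion to $f^{-1}$ and $fg$ — is correct and is a slicker route than the paper's induction, but it only becomes available after the two gaps above are filled.
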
 
\begin{proof} For (i), let $f,g\in P$, set $vf=\alpha$, $vg=\beta$. Then 
 $v(fg)=\alpha+\beta= \min \sp(fg)$. Also $\sp(fg)< \alpha+\gamma$ for all $\gamma>\beta$, so $\sp(fg)=\{\alpha+\beta\}$, and thus $fg\in P$. This proves $PP\subseteq P$. Also $f=f\fd 1+fh$ with $h\prec 1$ and
$f\asymp f \fd 1\in P$, hence $h=0$, so $1=\fd 1\in P$.  To finish the proof of (i) we show $f^{-1}\in P$. We have
$f^{-1}=\fd(f^{-1})+h$ with $h\prec f^{-1}$, so $1=f\fd(f^{-1}) + fh$ with $1\in P$, $f\fd(f^{-1})\in P$, $1\asymp f\fd(f^{-1})$, and $fh\prec 1$, hence $fh=0$ and thus $f^{-1}=\fd(f^{-1})\in P$. 
 
Next, let $f\in P$ and let $\mu$ be the ordinal that corresponds to the wellordered set $\sp(g)$.
We prove (ii) by transfinite induction on $\mu$. If $\mu=0$, then $g=0$, and (ii) holds trivially.
 Next,  let $\mu \ge 1$ and $\beta\in \sp(g)$. Then
$g=g|_{\beta}+ g\beta + h$ with $vh> \beta$, so
$fg= f\cdot (g|_{\beta})+ f\cdot g\beta + fh$. Using an inductive assumption for the first equality:
$$\sp\big(f\cdot (g|_{\beta})\big)\ =\ \alpha + \sp(g)^{<\beta},\qquad \sp\big(f\cdot g\beta\big)\ =\ \alpha+\beta,\qquad \sp\big(fh\big)\ >\ \alpha+\beta.$$
It follows that $\sp(fg)^{<\alpha+\beta}=\sp\big(f\cdot (g|_{\beta})\big) = \alpha + \sp(g)^{<\beta}$ and
$$\sp(fg)^{=(\alpha+\beta)}\ =\ \sp\big(f\cdot g\beta\big)\ =\ \alpha+\beta,\qquad \sp(fg)^{>\alpha+\beta}\ =\ \sp(fh)\ >\ \alpha+\beta.$$ 
 Hence $\sp(fg)^{\le\alpha+\beta}=\alpha+\sp(g)^{\le\beta}$, and so 
$\sp(fg)\supseteq \alpha +\sp(g)$, by varying $\beta$.  If $\mu$ is a successor ordinal,
then for $\beta=\max \sp g$ we have $h=0$,  so $\sp(fg) =\alpha  +\sp(g)$, and (ii) holds.   

Now assume $\mu$ is a limit ordinal. 
For all $\gamma>\sp(g)$ we have $\sp(fg)< \alpha+ \gamma$. Let $\delta\in \sp(fg)$; then $\delta< \alpha+\gamma$ for all 
$\gamma>\sp(g)$, hence $\delta\le\alpha+\beta$ for some $\beta\in \sp(g)$ (otherwise,
$\gamma:= \delta-\alpha> \sp(g)$, a contradiction). Thus $\sp(fg)\subseteq \alpha+ \sp(g)$. 
\end{proof}

\noindent
\begin{lemma} \label{T6} Assume {\rm(T6)}:  for all $f,g,\gamma$, if $\sp(f)+\sp(g)<\gamma $, then $ \sp(fg)<\gamma$. 
Then for all $f,g$, $\sp(fg)\subseteq \sp(f)+\sp(g)$ . 
\end{lemma}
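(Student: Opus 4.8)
The plan is to begin with a bookkeeping observation: (T6) entails the hypothesis of Lemma~\ref{FP}. Indeed, if $f\in P$ and $\sp(g)<\gamma'$, then $\sp(f)+\sp(g)=vf+\sp(g)<vf+\gamma'$, so (T6) with $\gamma:=vf+\gamma'$ gives $\sp(fg)<vf+\gamma'$. Hence Lemma~\ref{FP} applies: $P$ is a subgroup of $F^\times$ and $\sp(pg)=vp+\sp(g)$ for all $p\in P$ and all $g$; in particular the asserted inclusion already holds when $f\in P\cup\{0\}$ or $g\in P\cup\{0\}$. I would then prove $\sp(fg)\subseteq\sp(f)+\sp(g)$ for all $f,g$ by transfinite induction on the Hessenberg (natural) sum $\theta(f,g)$ of the order types of the wellordered sets $\sp(f)$ and $\sp(g)$, which is legitimate since $\theta$ is strictly monotone in each argument. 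Fix $f,g\notin P\cup\{0\}$, fix $\delta\in\sp(fg)$ with $\delta>vf+vg=\min\sp(fg)$ (the case $\delta=vf+vg$ being trivial), and suppose toward a contradiction that $\delta\notin\sp(f)+\sp(g)$.

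The engine is a ``peeling'' step applied to proper initial segments of $\sp(f)$. For $a\in\sp(f)$, write $f=f|_a+(f-f|_a)$; by Lemma~\ref{lemm2}, $\sp(f|_a)=\sp(f)^{<a}$ and $\sp(f-f|_a)=\sp(f)^{\ge a}$, and $v(f-f|_a)=a$. Since $\sp(f)^{<a}$ is a proper initial segment of $\sp(f)$ we have $\theta(f|_a,g)<\theta(f,g)$, so the inductive hypothesis gives $\sp(f|_a\cdot g)\subseteq\sp(f|_a)+\sp(g)\subseteq\sp(f)+\sp(g)$, whence $\delta\notin\sp(f|_a\cdot g)$; as $fg=f|_a\cdot g+(f-f|_a)g$, Lemma~\ref{lemm2}(ii) forces $\delta\in\sp\big((f-f|_a)g\big)$, and therefore $\delta\ge v\big((f-f|_a)g\big)=a+vg$. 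Thus $a+vg\le\delta$ for every $a\in\sp(f)$. Now case split. \emph{Case (a): there are $a\in\sp(f)$ and $\beta\in\sp(g)$ with $\beta>\delta-a$.} Put $h:=f-f|_a$ (so $vh=a$, $\delta\in\sp(hg)$), and $\beta_0:=\min\{\beta\in\sp(g):\beta>\delta-a\}\in\sp(g)$; then $v\big(hg-h\cdot g|_{\beta_0}\big)=a+v(g-g|_{\beta_0})=a+\beta_0>\delta$, so by (T2), (T4) and Lemma~\ref{lemm2}(i) one gets $\delta\in\sp(h\cdot g|_{\beta_0})$. Because $\operatorname{ot}(\sp(h))\le\operatorname{ot}(\sp(f))$ and $\operatorname{ot}(\sp(g|_{\beta_0}))<\operatorname{ot}(\sp(g))$ (as $\beta_0\in\sp(g)$), we have $\theta(h,g|_{\beta_0})<\theta(f,g)$, and the inductive hypothesis yields $\delta\in\sp(h\cdot g|_{\beta_0})\subseteq\sp(h)+\sp(g|_{\beta_0})\subseteq\sp(f)+\sp(g)$, a contradiction. (If $a=vf$ one uses $h:=f$ and $\delta\in\sp(fg)$ directly, with the same conclusion.)

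\emph{Case (b):} otherwise $a+\beta\le\delta$ for all $a\in\sp(f)$ and $\beta\in\sp(g)$, i.e.\ $\sp(f)+\sp(g)\le\delta$; since $\delta\notin\sp(f)+\sp(g)$ this means $\sp(f)+\sp(g)<\delta$, so (T6) with $\gamma=\delta$ gives $\sp(fg)<\delta$, contradicting $\delta\in\sp(fg)$. In either case we reach a contradiction, so $\delta\in\sp(f)+\sp(g)$; this closes the induction. The step I expect to be the genuine obstacle is exactly this final dichotomy, which handles the ``maximal'' exponents $\delta$ with $\sp(f)\le\delta-vg$ and $\sp(g)\le\delta-vf$: for such $\delta$ no truncation of either factor is itself small enough to isolate $\delta$, and the point is to see that one must nevertheless either find a tail-valuation $a$ of $f$ past which $\sp(g)$ reaches beyond $\delta-a$ (so that truncating $g$ \emph{does} help, feeding a strictly smaller pair to the induction) or else have all of $\sp(f)+\sp(g)$ strictly below $\delta$, in which case (T6) applies verbatim. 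A secondary subtlety worth flagging is the choice of induction measure: since deleting the least element of an \emph{infinite} wellordered set does not lower its order type, one cannot just ``peel off the leading term of $f$ and recurse''; instead one must peel proper initial segments $f|_a$, whose supports genuinely have smaller order type, and hand the two summands to the induction rather than iterating on a shrinking element.
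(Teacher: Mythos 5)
Your proof is correct and follows essentially the same route as the paper's: (T6) yields the hypothesis of Lemma~\ref{FP}, and then a transfinite induction on the order types of $\sp(f)$ and $\sp(g)$ combined with the dichotomy ``either some $\alpha+\beta>\delta$ with $\alpha\in\sp(f)$, $\beta\in\sp(g)$, in which case one decomposes $fg$ into products of truncations and tails and the tail--tail product has valuation $>\delta$, or else $\sp(f)+\sp(g)<\delta$ and (T6) applies directly.'' The only differences are organizational (reductio framing, Hessenberg sum in place of the paper's lexicographic pair, no separate successor-ordinal cases), and your two-stage peeling $fg=f|_a\cdot g+h\cdot g|_{\beta_0}+h\cdot(g-g|_{\beta_0})$ is the same decomposition the paper writes as $f\cdot g|_{\beta}+f|_{\alpha}\cdot g_{\ge\beta}+f_{\ge\alpha}\cdot g_{\ge\beta}$.
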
 
\begin{proof} It follows from (T6) that the hypothesis of Lemma~\ref{FP} is satisfied. 
Let $\lambda$ and $\mu$ be the ordinals corresponding respectively to the wellordered sets $\sp(f)$ and $\sp(g)$.
We proceed by induction on the lexicographically ordered pair $(\lambda,\mu)$. By (ii) of Lemma~\ref{FP}
we have $\sp(fg)= \sp(f)+\sp(g)$ for $\lambda\le 1$ and also for $\mu\le 1$. For $\lambda$ a successor ordinal, $\alpha:=\max \sp(f)$ gives $fg=(f|_{\alpha})g+(f\alpha)g$, and inductively,
$\sp\big((f|_{\alpha})g\big)\subseteq \sp(f|_{\alpha})+\sp(g)$, and by (ii) of Lemma~\ref{FP}, $\sp\big((f\alpha) g\big)=\alpha+\sp(g)$, so
$\sp(fg)\subseteq \sp(f)+\sp(g)$. The same follows if $\mu$ is a successor ordinal. Now assume $\lambda$ and $\mu$ are limit ordinals $>0$, and $\gamma\in \sp(fg)$. Then the hypothesis of the lemma yields $\alpha\in \sp(f)$ and
$\beta\in \sp(g)$ such that $\gamma<\alpha+\beta$. Set $f_{\ge \alpha} :=  f-f|_{\alpha}$
and $g_{\ge \beta} := g-g|_{\beta}$. Then $fg=f\cdot g|_{\beta}+f|_{\alpha}\cdot g_{\ge \beta}+f_{\ge \alpha}\cdot g_{\ge \beta}$. Now inductively,  $\sp\big(f\cdot g|_{\beta}+f|_{\alpha}\cdot g_{\ge \beta}\big) \subseteq \sp(f)+\sp(g)$. Also
$\sp( f_{\ge \alpha}\cdot g_{\ge \beta})\ge\alpha+\beta$. Hence $\gamma\in \sp(f)+\sp(g)$.  
\end{proof}

\noindent
Note that for wellordered sets $A, B\subseteq \Gamma$ and any $\gamma$ there are only finitely many pairs $(\alpha,\beta)\in A\times B$ such that $\alpha+\beta=\gamma$.

\begin{lemma}\label{FP6} Assume {\rm(T6)}. Then $(fg)\gamma=\sum (f\alpha)(g\beta)$
where the summation is over the finitely many $(\alpha,\beta)\in \sp(f)\times \sp(g)$ with $\alpha+\beta=\gamma$.
\end{lemma}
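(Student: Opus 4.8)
The plan is to compute the $\gamma$-term of a product $fg$ by peeling off, for each relevant pair $(\alpha,\beta)$ with $\alpha+\beta=\gamma$, the contribution $(f\alpha)(g\beta)$, which by Lemma~\ref{FP}(i) lies in $P$ (product of two elements of $P$), and then showing that after subtracting $fg|_\gamma$ and the sum of these contributions, what remains has valuation $>\gamma$. The characterization of $\gamma$-terms in the fourth bullet of the list before Proposition~\ref{emb} — namely that if $v(fg-(fg)|_\gamma-h)>\gamma$, $\gamma\in\sp(fg)$, and $h\in P\cup\{0\}$, then $h=(fg)\gamma$ — does most of the work once that estimate is in hand; note however that the finite sum $\sum(f\alpha)(g\beta)$ need not itself be a single term of $P$, so I must instead argue directly that it equals $(fg)\gamma$ via this uniqueness-type statement applied after grouping, or equivalently verify that $(fg)\gamma$ and $\sum(f\alpha)(g\beta)$ have the same image under the $\delta$-term maps for all $\delta$ and invoke injectivity; the cleanest route is: first handle $\gamma\notin\sp(fg)$ (then by Lemma~\ref{T6} no pair $(\alpha,\beta)\in\sp(f)\times\sp(g)$ sums to $\gamma$, since otherwise $\gamma\in\sp(f)+\sp(g)$ and we would need $\gamma\in\sp(fg)$ — wait, that implication goes the wrong way, so instead observe directly both sides vanish, the left by definition and the right because... ) — I will need Lemma~\ref{T6}'s inclusion $\sp(fg)\subseteq\sp(f)+\sp(g)$ to know that when $\gamma\notin\sp(f)+\sp(g)$ both sides are $0$.

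The key steps, in order, are as follows. \emph{Step 1.} Reduce to $\gamma\in\sp(f)+\sp(g)$: if $\gamma\notin\sp(f)+\sp(g)$, there are no pairs to sum, so the right side is $0$; and by Lemma~\ref{T6}, $\gamma\notin\sp(fg)$, so $(fg)\gamma=0$ as well. \emph{Step 2.} Assume $\gamma\in\sp(f)+\sp(g)$ and enumerate the finitely many pairs $(\alpha_1,\beta_1),\dots,(\alpha_k,\beta_k)\in\sp(f)\times\sp(g)$ with $\alpha_i+\beta_i=\gamma$. Write $f=f|_{\alpha_i}+f\alpha_i+\hat f_i$ with $v(\hat f_i)>\alpha_i$, and similarly $g=g|_{\beta_i}+g\beta_i+\hat g_i$. \emph{Step 3.} Expand $fg$ and, using Lemma~\ref{FP}(ii) (so $\sp(f'g')=vf'+\sp(g')$ for $f'\in P$) together with Lemma~\ref{T6}, show that every cross term other than the $(f\alpha_i)(g\beta_i)$ contributes either only to $(fg)|_\gamma$ (support $<\gamma$) or only to valuation $>\gamma$. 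Concretely: a product involving $f|_{\alpha_i}$ or $g|_{\beta_i}$ has support bounded in a way that, combined with $\alpha_i+\beta_i=\gamma$ and the fact that $(\alpha_i,\beta_i)$ are \emph{all} the pairs summing to $\gamma$, forces its $\gamma$-term to be $0$; and a product involving $\hat f_i$ or $\hat g_i$ has valuation $>\gamma$ by Lemma~\ref{T6} applied to supports shifted up. \emph{Step 4.} Conclude that $v\bigl(fg-(fg)|_\gamma-\sum_i(f\alpha_i)(g\beta_i)\bigr)>\gamma$; since each $(f\alpha_i)(g\beta_i)\in P$ with valuation $\gamma$, their sum lies in $P\cup\{0\}$ by Lemma~\ref{p}(iii) applied repeatedly. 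If the sum is nonzero then $\gamma\in\sp(fg)$ and the third bullet in the $\gamma$-term list gives $(fg)\gamma=\sum_i(f\alpha_i)(g\beta_i)$; if the sum is $0$ then $v(fg-(fg)|_\gamma)>\gamma$, so $\gamma\notin\sp(fg)$ by Lemma~\ref{lemm2}(iii), hence $(fg)\gamma=0$ as desired.

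The main obstacle I anticipate is Step 3 — the bookkeeping to show that the "unwanted" cross terms do not contribute to the $\gamma$-term. The subtlety is that a term like $f|_{\alpha_i}\cdot g\beta_i$ has valuation exactly $vf+\beta_i$ which could in principle be $\le\gamma$, and one must use that its \emph{support}, being $\subseteq\sp(f|_{\alpha_i})+\{\beta_i\}=\sp(f)^{<\alpha_i}+\beta_i$ by Lemma~\ref{FP}(ii), contains $\gamma$ only if some $\alpha'<\alpha_i$ in $\sp(f)$ has $\alpha'+\beta_i=\gamma$ — contradicting that $(\alpha_i,\beta_i)$ was on our list of all such pairs (we would have listed $(\alpha',\beta_i)$ instead, or rather in addition). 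The right way to organize this is to decompose $f$ and $g$ just once, not once per pair: sort $\sp(f)$ and write $f=\sum_{\alpha\in\sp(f)}f\alpha$ formally (finitely many $\alpha$ below any bound), truncate both $f$ and $g$ high enough that only finitely many terms matter for computing the $\gamma$-term, and then the product is a finite sum of $P$-products $(f\alpha)(g\beta)$ whose valuations are $\alpha+\beta$; collecting those with $\alpha+\beta=\gamma$ gives the claimed sum, those with $\alpha+\beta<\gamma$ contribute to $(fg)|_\gamma$, and those with $\alpha+\beta>\gamma$ are absorbed into valuation $>\gamma$ — with Lemma~\ref{T6} ensuring the tail (the part of $f$, $g$ we truncated away) contributes nothing at level $\gamma$. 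I expect the write-up to be short once this decomposition is set up correctly.
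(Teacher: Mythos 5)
Your overall strategy --- split off finitely many $P$-terms from $f$ and $g$, multiply out, kill the remainder at level $\gamma$ via Lemma~\ref{T6}, and finish with Lemma~\ref{p}(iii) and the uniqueness characterization of the $\gamma$-term --- is the paper's strategy, and your Step~4 endgame is sound. But the way you propose to execute Step~3 in your final paragraph does not work: the parenthetical ``finitely many $\alpha$ below any bound'' is false. A wellordered subset of an ordered abelian group can be infinite and bounded above (e.g.\ $\{-1/n : n\ge 1\}$ in $\Q$), so $f|_\delta$ is in general \emph{not} a finite sum of elements of $P$, no matter how large $\delta$ is chosen; ``truncating high enough'' never reduces $fg$ to a finite sum of $P$-products plus a negligible tail. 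Your per-pair decomposition of Steps~2--3 has the separate defect you half-identify: a term such as $f|_{\alpha_i}\cdot g\beta_i$ genuinely can have $\gamma$ in its support, namely when another listed pair $(\alpha',\beta_i)$ with $\alpha'<\alpha_i$ also sums to $\gamma$, so those cross terms are not all discardable and the contributions get double-counted.

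The repair is small and is exactly the paper's move: choose \emph{finite} sets $A\subseteq\sp(f)$ and $B\subseteq\sp(g)$ containing, respectively, all first and all second coordinates of the (finitely many) pairs summing to $\gamma$, and subtract only those terms, setting $f^*:=f-\sum_{\alpha\in A}f\alpha$ and $g^*:=g-\sum_{\beta\in B}g\beta$, so that $\sp(f^*)=\sp(f)\setminus A$, $\sp(g^*)=\sp(g)\setminus B$, and
$$fg\ =\ \sum_{(\alpha,\beta)\in A\times B}(f\alpha)(g\beta)\ +\ \bigl(\textstyle\sum_{\alpha\in A}f\alpha\bigr)g^*\ +\ f^*g.$$
Lemma~\ref{T6} then gives $\gamma\notin\sp(f^*g)$ and $\gamma\notin\sp\bigl((\sum_{\alpha\in A}f\alpha)g^*\bigr)$, because any representation $\gamma=\alpha+\beta$ with $\alpha\in\sp(f)$ and $\beta\in\sp(g)$ forces $\alpha\in A$ and $\beta\in B$. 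Taking $\gamma$-terms of this identity (using their additivity, or deriving from it your valuation estimate of Step~4) finishes the proof; the pairs $(\alpha,\beta)\in A\times B$ with $\alpha+\beta\ne\gamma$ contribute $0$ since $(f\alpha)(g\beta)\in P$ has valuation $\alpha+\beta$.
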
 
\begin{proof}
Take finite sets $A\subseteq \sp(f)$ and $B\subseteq \sp(g)$
such that $(\alpha,\beta)\in A\times B$ whenever $\alpha+\beta=\gamma$ and $(\alpha,\beta)\in \sp(f)\times \sp(g)$.
Set $f^*:= f-\sum_{\alpha\in A} f\alpha$ and $g^*:=g-\sum_{\beta\in B}g\beta$.  Then $\sp\big(\sum_{\alpha\in A} f\alpha\big)= A$, $\sp\big(\sum_{\beta\in B} g\beta\big)= B$, and
\begin{align*} fg\ &=\ \sum_{(\alpha,\beta)\in A\times B} (f\alpha)(g\beta) + (\sum_{\alpha\in A}f\alpha) g^* + f^*\cdot g,\\\sp(f^*)\ &=\  \sp(f)\setminus A, \qquad \sp(g^*)\ =\ \sp(g)\setminus B.
\end{align*} 
From Lemma~\ref{T6} we obtain
 $\gamma\notin \sp\big( (\sum_{\alpha\in A}f\alpha) g^*\big)$ and $\gamma\notin \sp(f^*\cdot g)$. Hence
 $$(fg)\gamma\ =\ \sum_{(\alpha,\beta)\in A\times B} \big((f\alpha)(g\beta) \big)\gamma\ =\ \sum_{(\alpha,\beta)\in A\times B,\ \alpha+\beta=\gamma} (f\alpha)(g\beta).\qquad\qquad\quad  \qedhere$$
\end{proof}

\noindent
Assume (T6). Then $P$ is a subgroup of $F^\times$ by Lemma~\ref{FP}.  
The inclusion $C^\times\to P$ and the valuation restricted to $P$ yield an exact sequence 
$$1\to C^\times \to P \to \Gamma\to 0.$$
If $F$ is algebraically closed, then $P$ is divisible,  so this sequence splits. Moreover, as we indicated in the introduction, 
any embedding $F\to C((t^\Gamma))$ with truncation closed image yields a splitting $\gamma\mapsto \tau^\gamma: \Gamma\to P$.

Let $\gamma\to \tau^\gamma: \Gamma\to P$ be a splitting. Together with our map $(f,\alpha)\mapsto f|_{\alpha}$
this yields a truncation structure on $F$. Given $f$ we have
$f{\gamma}=c_{\gamma}\tau^\gamma$ with $c_\gamma\in C$. Now the proof of
Proposition~\ref{emb} and  Lemma~\ref{FP6} show that the map
$$f\mapsto \sum_{\gamma} c_\gamma t^\gamma\ :\ F\to C((t^\Gamma))$$
is an embedding with truncation closed image that induces the given truncation structure on $F$. 
The remark following the proof of Proposition~\ref{emb} also shows that this is the only such embedding that sends 
$\tau^\gamma$ to $t^\gamma$ for all $\gamma$. This concludes the proof of Theorem~\ref{t1}.

\section*{Required Declarations}

\medskip\noindent
{\bf Ethical Approval}: not applicable.

\medskip\noindent
{\bf Funding}: no funding received.

\medskip\noindent
{\bf Availability of data and materials}: not applicable.

\bibliographystyle{amsplain}

\end{document}